\theoremstyle{plain}
\newtheorem*{main}{Main Theorem}
\newtheorem{thm}{Theorem}[section]
\newtheorem{lem}[thm]{Lemma}
\newtheorem{cor}[thm]{Corollary}
\theoremstyle{definition}
\newtheorem{rmk}[thm]{Remark}
\begin{document}

\title[Beauville $p$-groups of wild type]{Beauville $p$-groups of wild type and groups of maximal class} 

\author[G.A.\ Fern\'andez-Alcober]{Gustavo A.\ Fern\'andez-Alcober}
\address{Department of Mathematics\\ University of the Basque Country UPV/EHU\\
48080 Bilbao, Spain}
\email{gustavo.fernandez@ehu.eus}

\author[N.\ Gavioli]{Norberto Gavioli}
\address{Universit\`a degli Studi dell'Aquila (Italy)}
\email{gavioli@univaq.it}

\author[\c{S}.\ G\"ul]{\c{S}\"ukran G\"ul}
\address{Department of Mathematics\\ Middle East Technical University\\
06800 Ankara, Turkey}
\email{gsukran@metu.edu.tr}

\author[C.M.\ Scoppola]{Carlo M.\ Scoppola}
\address{Universit\`a degli Studi dell'Aquila (Italy)}
\email{scoppola@univaq.it}

\thanks{The first and third authors acknowledge financial support from the Spanish Government, grant MTM2014-53810-C2-2-P, and from the Basque Government, grant IT974-16.}

\begin{abstract}
Let $G$ be a Beauville finite $p$-group.
If $G$ exhibits a `good behaviour' with respect to taking powers, then every lift of a Beauville structure of $G/\Phi(G)$ is a Beauville structure of $G$.
We say that $G$ is a Beauville $p$-group of wild type if this lifting property fails to hold.
Our goal in this paper is twofold: firstly, we fully determine the Beauville groups within two large families of $p$-groups of maximal class, namely metabelian groups and groups with a maximal subgroup of class at most $2$; secondly, as a consequence of the previous result, we obtain infinitely many Beauville $p$-groups of wild type.
\end{abstract}

\maketitle

\section{Introduction}

\emph{Beauville groups\/} are finite groups that arise in the construction of an interesting class of complex surfaces, the so-called \emph{Beauville surfaces\/}, introduced by Catanese in \cite{cat} following an example of Beauville
\cite[page 159]{bea}.
The problem of determining what finite groups are Beauville has received considerable attention in the past years, see the survey paper \cite{jon}.
Catanese \cite{cat} showed that a finite abelian group is a Beauville group if and only if it is isomorphic to
$C_n\times C_n$, with $n>1$ and $\gcd(n,6)=1$.
A remarkable result, proved independently by Guralnick and Malle \cite{GM} and by Fairbairn, Magaard and Parker \cite{FMP}, is that every non-abelian finite simple group other than $A_5$ is a Beauville group.

The determination of Beauville groups is based on the following purely group-theoretical characterization.
Given a group $G$ and $S=\{x,y\}\subseteq G$, the triple $T=\{x,y,xy\}$ is uniquely determined by $S$ up to conjugacy, and so we can define
\begin{equation}
\label{Sigma}
\Sigma(S)
=
\bigcup_{t\in T} \, \langle t \rangle^g.
\end{equation}
Then $G$ is a Beauville group if and only if it is a $2$-generator group and
\begin{equation}
\label{BS}
\text{$\Sigma(S_1) \cap \Sigma(S_2)=1$ for some $2$-element sets of generators $S_1$ and $S_2$ of $G$.}
\end{equation}
We say that $S_1$ and $S_2$ form a \emph{Beauville structure\/} for $G$.

Knowledge about Beauville finite $p$-groups was scarce until very recently, and only groups of small order or some very specific families had been tested for being Beauville (see \cite{BBF} and \cite{bos}).
In \cite[Theorem 2.5]{FG}, Fern\'andez-Alcober and G\"ul extended Catanese's criterion in the specific case of $p$-groups from abelian groups to a much wider family, including all $p$-groups that are known to have a `good behaviour' with respect to taking powers, and in particular all groups of order at most $p^{\hspace{.4pt}p}$.
More precisely, if $G$ is such a $2$-generator group of exponent $p^e$, then
\begin{equation}
\label{good power structure}
\text{$G$ is a Beauville group if and only if $p\ge 5$ and $|G^{p^{e-1}}|\ge p^2$.}
\end{equation}
This criterion is not valid for all finite $p$-groups, and in \cite[Corollary 2.12]{FG} there are infinitely many examples (among which infinitely many $p$-groups of maximal class with an abelian maximal subgroup) for which
(\ref{good power structure}) fails to hold.

Different Beauville structures in the same group can give rise to non-isomorphic Beauville surfaces, and another main problem in this theory is to determine the number of Beauville surfaces arising from a given Beauville group.
Gonz\'alez-Diez, Jones, and Torres-Teigell \cite{GJT} have given a formula for this number in the case of abelian Beauville groups.
The first step to solve this problem is to determine all Beauville structures in the group, which is relatively easy for abelian groups, but much more difficult in the general setting, due to the effect of conjugacy in (\ref{Sigma}).
In any case, it is interesting to obtain as many Beauville structures as possible in a Beauville group.
The finite $p$-groups considered in \cite[Theorem 2.5]{FG} have a good behaviour in this respect, since they have the property that \emph{every lift of a Beauville structure of $G/\Phi(G)\cong C_p\times C_p$ is a Beauville structure
of $G$\/}.
We say that a Beauville $p$-group $G$ is of \emph{tame type\/} if this property holds in $G$; otherwise, we say that $G$ is of \emph{wild type\/}.
Thus all $p$-groups with good power structure are of tame type but, as a careful analysis shows, all counterexamples to (\ref{good power structure}) given in \cite[Corollary 2.12]{FG} are also of tame type.

Our goal in this paper is to determine all Beauville groups within two very large families of $p$-groups of maximal class, namely those which are either metabelian or have a maximal subgroup of class at most $2$.
These families have been studied in great detail by Miech \cite{mie} and by Leedham-Green and McKay \cite{LM,LM2}, respectively; in particular, constructions are given in these papers showing that there is a large number of such groups.
Thus we provide a huge extension of Corollary 2.12 of \cite{FG}, which only covers one $p$-group of maximal class of every order.
An important consequence of our results is that they provide an infinite family of Beauville $p$-groups of wild type.

Next we state our main theorem in this paper.
The Beauville $p$-groups of maximal class of order at most $p^{\hspace{.4pt}p}$ were already determined in
\cite[Corollary 2.10]{FG}: they are exactly the groups of exponent $p$, with $p\ge 5$.
Thus we restrict to groups of order at least $p^{\hspace{.4pt}p+1}$.
In the theory of groups of maximal class, a significant role is played by the maximal subgroup
$G_1=C_G(G'/\gamma_4(G))$, and this is also the case in our result.
In the remainder, we use the term \emph{maximal branch\/} in a $2$-generator finite $p$-group $G$ to mean the difference $B(M)=M\smallsetminus\Phi(G)$, where $M$ is a maximal subgroup of $G$.

\begin{main}
Let $G$ be a $p$-group of maximal class of order $p^n$, with $n\ge p+1$, and assume that either $G$ is metabelian or $G$ contains a maximal subgroup of class at most $2$.
Then $G$ is a Beauville group if and only if $p\ge 5$ and one of the following two cases holds, where $X$ is the set of all elements of $G\smallsetminus\Phi(G)$ of order $p$:
\begin{enumerate}
\item
$X=G\smallsetminus G_1$.
\item
$X$ is the union of exactly two maximal branches of $G$, and either $n\not\equiv 2 \pmod{p-1}$, or $n=p+1$ and one of those maximal branches is $B(G_1)$.
\end{enumerate}
Also, all groups in (i) are Beauville groups of tame type, all groups in (ii) are of wild type, and there exist infinitely many groups in each of the cases.
\end{main}

\vspace{5pt}

\textit{Notation.\/}
If $G$ is a finitely generated group, $d(G)$ denotes the minimum number of generators of $G$.
On the other hand, if $G$ is a finite $p$-group, we write $\Omega_i(G)$ for the subgroup generated by all elements of $G$ of order at most $p^i$.

\section{Proof of the main theorem}


In this section $G$ will always denote a group of order $p^n$ and class $n-1$, where $n\ge 2$.
These groups are called \emph{$p$-groups of maximal class\/}.
The Beauville groups of order $p^2$ and $p^3$ can be easily determined, so we assume that $n\ge 4$ in the remainder.
Then we set $G_i=\gamma_i(G)$ for all $i\ge 2$, and $G_1=C_G(G_2/G_4)$.
We have $|G_i:G_{i+1}|=p$ for all $i=1,\ldots,n-1$, and the \emph{degree of commutativity\/} of $G$ is defined as the maximum (non-negative) integer $\ell\le n-3$ such that $[G_i,G_j]\le G_{i+j+\ell}$ for all $i,j\ge 1$.
Also, the only normal subgroups of $G$ are $G$ itself, its maximal subgroups, and the subgroups $G_i$ for $i\ge 2$.
Observe that $Z(G)=G_{n-1}$, $\Phi(G)=G_2$, and $d(G)=2$.

In his seminal paper \cite{bla}, Blackburn established many of the cornerstones of the theory of groups of maximal class.
More precisely, he showed that $\ell>0$ if and only if $G_1=C_G(G_{n-2})$, and that this is the case whenever
$n\ge p+2$.
An element $s\in G$ is called \emph{uniform\/} if $s\notin G_1\cup C_G(G_{n-2})$.
Then $s^p\in Z(G)$ and the order of $s$ is at most $p^2$.
Also, $|C_G(s)|=p^2$ and the conjugates of $s$ are exactly the elements in the coset $s\Phi(G)$.
If we choose an element $s_1\in G_1\smallsetminus G_2$ and we define $s_i=[s_1,s,\overset{i-1}{\ldots},s]$, then $s_i\in G_i\smallsetminus G_{i+1}$ for all $i=1,\ldots,n-1$. 
Thus $Z(G)=\langle s_{n-1} \rangle$.
Finally, we recall a couple of facts about the power structure of $G$.
If $|G|\le p^{\hspace{.4pt}p+1}$ then $\exp G/Z(G)=\exp \Phi(G)=p$, while for $|G|\ge p^{\hspace{.4pt}p+2}$ we have
\begin{equation}
\label{pth powers}
x^p \in G_{i+p-1} \smallsetminus G_{i+p},
\quad
\text{for all $x\in G_i\smallsetminus G_{i+1}$ and all $i=1,\ldots,n-p$.}
\end{equation}
These properties of $p$-groups of maximal class  can be found in \cite{gus}, \cite[Chapter 4, Section 14]{hup}, or
\cite[Chapter 3]{LM3}.

\begin{lem}
\label{all branches uniform}
Let $G$ be a $p$-group of maximal class, and let $M$ be a maximal subgroup of $G$.
Then all elements in $B(M)$ have the same order, and if $M\ne G_1$, this order is either $p$ or $p^2$.
\end{lem}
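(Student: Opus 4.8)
The plan is to reduce everything to a single statement about cosets and then to verify that statement in three regimes, since the power structure of $G$ behaves quite differently for $|G|\le p^{\hspace{.4pt}p+1}$ and for $|G|\ge p^{\hspace{.4pt}p+2}$. The common reduction is this: given $x,y\in B(M)$, the image of $x$ generates the cyclic group $M/\Phi(G)$ of order $p$, so $y\equiv x^{k}\pmod{\Phi(G)}$ with $\gcd(k,p)=1$, and we may write $y=x^{k}z$ with $z\in\Phi(G)$ and $x^{k}\in B(M)$. Since $k$ is coprime to $p$, the element $x^{k}$ has the same order as $x$, so it suffices to prove that the order is constant along each coset $w\Phi(G)$ with $w\in B(M)$, i.e.\ that $o(wz)=o(w)$ for all $w\in B(M)$ and all $z\in\Phi(G)$. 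Everything then follows, and the two extra claims about $M\ne G_1$ amount to checking that the common order lies in $\{p,p^{2}\}$.

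First I would treat $M\ne G_1$ with $|G|\ge p^{\hspace{.4pt}p+2}$, so that $n\ge p+2$. By the results of Blackburn recalled above, $G_1=C_G(G_{n-2})$ in this range; since $M$ and $G_1$ are distinct maximal subgroups, $M\cap G_1=\Phi(G)=G_2$, and hence every element of $B(M)=M\smallsetminus G_2$ lies outside $G_1\cup C_G(G_{n-2})$ and is uniform. For a uniform $w$ the conjugacy class $w^{G}$ is exactly the coset $w\Phi(G)$, so the order is automatically constant on that coset and $o(wz)=o(w)$; with the reduction above this gives a common order on $B(M)$, which by the uniform property is $p$ or $p^{2}$.

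Next I would handle $M=G_1$ with $|G|\ge p^{\hspace{.4pt}p+2}$, where the conjugacy argument is unavailable because the elements of $G_1\smallsetminus G_2$ are not uniform. Here I would use the power map \eqref{pth powers} directly: every $x\in G_1\smallsetminus G_2$ has depth $1$, and iterating \eqref{pth powers} pushes the depth through the sequence $1,\,p,\,2p-1,\dots$, increasing by $p-1$ at each $p$-th power. Since this sequence, and hence the least $m$ with $x^{p^{m}}=1$, depends only on $n$ and $p$ and not on $x$, all elements of $B(G_1)$ share a common order. The delicate point, which I expect to be the main technical obstacle, is the last step: when the depth first exceeds $n-p$ the formula \eqref{pth powers} no longer applies, and one must check by hand that this final $p$-th power lands in $Z(G)$ and is then killed, so that the termination really is governed only by $n$ and $p$.

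Finally, for $|G|\le p^{\hspace{.4pt}p+1}$ I would argue uniformly in $M$ using the recalled facts $\exp\Phi(G)=\exp G/Z(G)=p$. For $w\in B(M)$ and $z\in\Phi(G)$, a Hall--Petrescu expansion gives $(wz)^{p}=w^{p}z^{p}\prod_{i\ge 2}c_i^{\binom{p}{i}}$ with $c_i\in\gamma_i(\langle w,z\rangle)\le G_{i+1}\le\Phi(G)$; then $z^{p}=1$, and $\exp\Phi(G)=p$ together with $p\mid\binom{p}{i}$ kills every factor with $2\le i\le p-1$, while the term with $i=p$ vanishes because $\gamma_p(\langle w,z\rangle)\le G_{p+1}=1$ when $n\le p+1$. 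Hence $(wz)^{p}=w^{p}$, so $o(wz)=o(w)$, and the reduction yields a common order on $B(M)$; as $w^{p}\in Z(G)$ and $\exp Z(G)=p$, this order is $p$ or $p^{2}$, which completes the proof.
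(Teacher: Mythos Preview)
Your argument is correct and follows the same strategy as the paper: conjugacy of uniform elements for $M\ne G_1$, the power formula \eqref{pth powers} for $M=G_1$ when $n\ge p+2$, and a direct computation for $n\le p+1$. The differences are organizational (your explicit reduction to single cosets $w\Phi(G)$) and technical: for $n\le p+1$ you run Hall--Petrescu by hand, whereas the paper simply observes that $M$, having order $\le p^{\hspace{.4pt}p}$, is regular with $\Phi(G)\le M$ of exponent $p$, so $\Omega_1(M)\in\{\Phi(G),M\}$; both routes give the same conclusion.

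One small correction to your ``delicate point'' in the case $M=G_1$, $n\ge p+2$: the last nontrivial power $x^{p^{m}}$ produced by \eqref{pth powers} lies in $G_{n-p+1}$, not necessarily in $Z(G)=G_{n-1}$ (e.g.\ $p=5$, $n=8$ gives depth $5$, not $7$). What is actually needed---and what the paper also uses without comment---is the standard companion fact that $G_{n-p+1}$ has exponent $p$ (equivalently $G_i^{\,p}=G_{i+p-1}$ with the convention $G_j=1$ for $j\ge n$), so one further $p$-th power kills it and the order depends only on $n$ and $p$.
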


\begin{proof}
Assume first that $M\ne G_1$, $C_G(G_{n-2})$.
If $s\in B(M)$ then we have $B(M)=\cup_{i=1}^{p-1} \, s^i\Phi(G)$.
All powers $s^i$ are uniform elements of $G$ for $i=1,\ldots,p-1$, and they have the same order $p$ or $p^2$.
Since all elements in the coset $s^i\Phi(G)$ are conjugate to $s^i$, the result follows in this case.

Now suppose that $M=G_1$ or $C_G(G_{n-2})$.
If $n\ge p+2$ then $M=G_1$ and by (\ref{pth powers}), every element in $B(M)$ has order $p^k$, where $k=\lceil \frac{n-1}{p-1} \rceil$.
If $n\le p+1$ then $M$ is a regular $p$-group in which $\Phi(G)$ is a maximal subgroup of exponent $p$.
Thus $\Omega_1(M)=\Phi(G)$ or $M$, and all elements of $B(M)$ have order $p$ or $p^2$, respectively.
\end{proof}

If $G$ is a $p$-group of maximal class, we denote by $\mu(G)$ the number of maximal subgroups
$M\ne G_1$ for which all elements in $B(M)$ have order $p$.
The next result shows that the value of $\mu(G)$ is quite restricted if either $G$ is metabelian or $G_1$ is of class at most $2$.
Its proof relies on two technical results of Miech regarding the calculation of $p$th powers in groups of maximal class that can be found in \cite{mie} and \cite{mie2}.

\begin{thm}
\label{values of mu}
Let $G$ be a $p$-group of maximal class.
If either $G$ is metabelian or $G_1$ is of class at most $2$, then $\mu(G)=0$, $1$, $2$ or $p$.
Furthermore, if $G$ has an abelian maximal subgroup then $\mu(G)=0$, $1$, or $p$.
\end{thm}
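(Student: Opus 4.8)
The plan is to translate $\mu(G)$ into a root count for a single low-degree polynomial over $\F_p$. Fix $s_1\in G_1\smallsetminus G_2$ and a uniform element $s$; then $G=\langle s_1,s\rangle$ and $\bar s_1,\bar s$ form a basis of $G/\Phi(G)\cong C_p\times C_p$. The maximal subgroups different from $G_1$ are exactly the $p$ subgroups $M_c=\langle s_1^c s,\Phi(G)\rangle$ with $c\in\{0,1,\dots,p-1\}$, and $s_1^c s\in B(M_c)$ since its image $(c,1)$ is nonzero. As $s_1^c s\notin G_1$ it is uniform, so $(s_1^c s)^p\in Z(G)=\langle s_{n-1}\rangle$, a group of order $p$; write $(s_1^c s)^p=s_{n-1}^{f(c)}$ with $f(c)\in\F_p$. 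By Lemma~\ref{all branches uniform} every element of $B(M_c)$ has the same order as $s_1^c s$, which is $p$ precisely when $f(c)=0$. Hence
\[
\mu(G)=\#\{\,c\in\F_p : f(c)=0\,\}.
\]

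The core of the argument is to prove that $f$ is, as a function of $c$, a polynomial of degree at most $2$. For $|G|\ge p^{\hspace{.4pt}p+2}$ I would expand $(s_1^c s)^p$ by a collection (Hall--Petrescu) process and project the outcome onto $Z(G)=G_{n-1}$, evaluating the surviving terms by means of Miech's two power formulas from \cite{mie,mie2}. Since $c$ enters only through the $p$ factors $s_1^c$, a term reaching $G_{n-1}$ in which $s_1$ is collected $k$ times contributes a factor of degree $k$ in $c$ (through binomial coefficients such as $\binom{c}{k}$). The crux is that under either hypothesis---$G$ metabelian, so $[G_2,G_2]=1$, or $G_1$ of class at most $2$, so $\gamma_3(G_1)=1$---the commutators that survive to the centre involve $s_1$ at most twice, whence $f(c)=\alpha c^2+\beta c+\gamma$ for suitable $\alpha,\beta,\gamma\in\F_p$ depending only on $G$. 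When $|G|\le p^{\hspace{.4pt}p+1}$ one instead uses $\exp G/Z(G)=\exp\Phi(G)=p$: then $s_1^p,s^p\in Z(G)$ and the intermediate collection terms (coefficients $\binom{p}{i}\equiv 0 \pmod p$ applied to elements of $\Phi(G)$) are annihilated, so that $f$ is again of degree at most $2$.

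Granting the degree bound, the first assertion follows by elementary root counting over $\F_p$. If $\alpha=\beta=\gamma=0$ then $f\equiv 0$ and $\mu(G)=p$; otherwise $f$ is a nonzero polynomial of degree at most $2$, hence has at most two roots, so $\mu(G)\in\{0,1,2\}$. Therefore $\mu(G)\in\{0,1,2,p\}$. For the final statement, suppose $G$ has an abelian maximal subgroup $A$. As $\Phi(G)=G_2\le A$ and $A$ is abelian, $A$ centralizes $G_2$, so $A\le C_G(G_2)\le C_G(G_2/G_4)=G_1$ and therefore $A=G_1$ by maximality. With $G_1$ abelian, even the terms involving $s_1$ twice vanish, since the relevant brackets (such as $[s_1,s,s_1]$) lie in $[G_1,G_1]=1$; hence the quadratic coefficient $\alpha$ is $0$ and $f$ is linear. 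A nonzero linear polynomial over $\F_p$ has a unique root and a nonzero constant has none, so $\mu(G)\in\{0,1,p\}$.

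I expect the genuine difficulty to be concentrated in the second paragraph: organising the collection of $(s_1^c s)^p$ and invoking Miech's formulas to certify that no term of degree $\ge 3$ in $c$ reaches $Z(G)$, and that the two structural hypotheses are precisely what force the surviving commutators to carry $s_1$ at most twice. Everything else---the identification of the maximal branches, the reduction to counting zeros of $f$, and the passage from $\deg f\le 2$ (respectively $\deg f\le 1$) to the admissible values of $\mu(G)$---is then formal.
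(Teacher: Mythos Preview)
Your plan coincides with the paper's: parametrise the $p$ maximal branches outside $G_1$ by $c\in\F_p$, show that the central $p$th power $(s_1^{\,c}s)^p$ is governed by a polynomial $f(c)$ of degree at most $2$, and count roots. The paper carries out exactly this, obtaining $(ss_1^{\,i})^p=a^ib^{i^2}$ via Miech's formulas, with one simplification you do not use: it first assumes $\mu(G)\ge 2$ and chooses $s$ and $s_1$ so that $s^p=(ss_1)^p=1$, whence $a=b^{-1}$ and $(ss_1^{\,i})^p=b^{i(i-1)}$; the dichotomy ``$b=1$ or not'' then yields $\mu(G)\in\{2,p\}$ with no further root-counting. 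The abelian case is handled identically in both arguments, by observing that the quadratic coefficient $b$ lies in $[G_1,G_1]$.

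Two small cautions on your sketch. First, your separate treatment of $|G|=p^{\,p+1}$ is incomplete: after killing $c_2^{\binom{p}{2}},\dots,c_{p-1}^{\binom{p}{p-1}}$ via $\exp\Phi(G)=p$, the final Hall--Petrescu term $c_p\in\gamma_p(G)=Z(G)$ survives with exponent $\binom{p}{p}=1$, and its dependence on $c$ still requires the structural hypotheses to be bounded in degree; the paper simply applies Miech's formulas uniformly and does not split off this case. Second, your heuristic that ``$[G_2,G_2]=1$ forces at most two occurrences of $s_1$'' is not quite the mechanism in the metabelian case, since iterated brackets with $s_1\in G_1\smallsetminus G_2$ need not vanish merely because $G_2$ is abelian; there the quadratic shape really comes from Miech's explicit formula \cite[Lemma~8]{mie} rather than a weight count. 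For $G_1$ of class at most $2$, by contrast, your heuristic is accurate and matches the paper's inductive computation $\sigma_{i,k}=s_{k+1}^{\,i}t_{k+1}^{\binom{i}{2}}$ with $t_{k+1}\in[G_1,G_1]$.
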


\begin{proof}
If $p=2$ then $G$ has only $2$ maximal subgroups different from $G_1$ and the result is obviously true.
Thus we consider $p$ to be an odd prime in the remainder of the proof.

Let us assume that $\mu(G)\ge 2$ and prove that $\mu(G)=2$ or $p$.
Since $\Phi(G)=G_2$, we can choose a uniform element $s$ and an element $s_1\in G_1\smallsetminus G_2$ so that $s$ and $ss_1$ belong to two maximal branches with all elements of order $p$.

If $G$ is metabelian, then by \cite[Lemma~8]{mie} and since $s^p=1$, we can write $(ss_1^i)^p=a^ib^{i^2}$ for all $i=1,\ldots,p-1$, where
\[
a = s_1^ps_2^{\binom{p}{2}}\ldots s_p^{\binom{p}{p}}
\]
and $b$ is a power of $s_{n-1}$ independent of $i$.
Since $(ss_1)^p=1$, we have $a=b^{-1}$ and consequently
\[
(ss_1^i)^p = b^{i^2-i} = b^{i(i-1)}.
\]
Since $b$ is of order at most $p$, this power is $1$ for some $i=2,\ldots,p-1$ (observe that $p$ is odd) if and only if $b=1$.
It follows from Lemma \ref{all branches uniform} that $\mu(G)$ is either $2$ or $p$.

Now assume that $G_1$ is of class at most $2$.
By applying \cite[Theorem~4]{mie2}, we get
\begin{equation}
\label{miech result 2}
(ss_1^i)^p = s_1^{ip} \sigma_{i,1}^{\binom{p}{2}}\ldots \sigma_{i,p-1}^{\binom{p}{p}} x_{i,p},
\end{equation}
where $\sigma_{i,k} = [s_1^i,s,\overset{k}{\ldots},s]$ and
\begin{equation}
\label{Q(p)}
x_{i,p} = \prod_{k=1}^{p-1}\prod_{\ell=0}^{k-1} [\sigma_{i,k},\sigma_{i,\ell}]^{B(p,k,\ell)},
\end{equation}
for some integer $B(p,k,\ell)$ depending on $p$, $k$ and $\ell$, but not on $i$.

Let us show, by induction on $k$, that
\begin{equation}
\label{sigma_i,k}
\sigma_{i,k} = s_{k+1}^i t_{k+1}^{\binom{i}{2}},
\quad
\text{for some $t_{k+1} \in [G_1,G_1]\le Z(G_1)$.}
\end{equation}
First of all, observe that the basis of the induction is given by
\[
\sigma_{i,1} = [s_1^i,s] = s_1^{-i} (s_1^s)^i = s_1^{-i}(s_1s_2)^i = s_2^i[s_2,s_1]^{\binom{i}{2}},
\]
where the last equality follows from the condition that the class of $G_1$ is at most $2$.
Now if $k\ge 2$, by the induction hypothesis we have
\begin{align*}
\sigma_{i,k}
&=
[s_k^i,s] [t_k^{\binom{i}{2}},s]
=
[s_k^i,s] [t_k,s]^{\binom{i}{2}}
=
s_{k+1}^i [s_{k+1},s_k]^{\binom{i}{2}} [t_k,s]^{\binom{i}{2}}
\\
&=
s_{k+1}^i \big( [s_{k+1},s_k] [t_k,s] \big)^{\binom{i}{2}},
\end{align*}
and the induction is complete.

Now from (\ref{sigma_i,k}) we get
\[
[\sigma_{i,k},\sigma_{i,\ell}]
=
[s_{k+1}^i,s_{l+1}^i]
=
[s_{k+1}, s_{l+1}]^{i^2},
\]
and this implies that $x_{i,p}=x_{1,p}^{i^2}$ for all $i=1,\ldots,p-1$.
Hence (\ref{miech result 2}) yields
\[
(ss_1^i)^p
=
\big( s_1^ps_2^{\binom{p}{2}}\dots s_p \big)^i
\Big(
t_2^{\binom{p}{2}}\dots t_p^{\binom{p}{p}}
\,
\prod_{j=1}^{p-1} \prod_{k=j+1}^{p}\, [s_j,s_k]^{\binom{p}{j}\binom{p}{k}} 
\Big)^{\binom{i}{2}}
\
x_{1,p}^{i^2}.
\]
Since $p$ is odd, we can write again $(ss_1^i)^p=a^ib^{i^2}$ for all $i=1,\ldots,p-1$, with $a$ and $b$ not depending on $i$.
As above, we conclude that $\mu(G)=2$ or $p$ also in this case.

Finally, if $G$ has an abelian maximal subgroup then obviously that subgroup must be $G_1$.
Observe that in the above discussion we have $b\in [G_1,G_1]$.
Hence $b=1$ in the present case and $(ss_1^i)^p=1$ for all $i=1,\ldots,p-1$.
In other words, if $\mu(G)\ge 2$ then necessarily $\mu(G)=p$.
This completes the proof.
\end{proof}


We also need the following lemma about maximal subgroups of class $\le 2$ in a group of maximal class.

\begin{lem}
\label{max of class 2}
Let $G$ be a $p$-group of maximal class of order at least $p^5$.
If $G$ has a maximal subgroup $M$ of class at most $2$ then $M=G_1$.
\end{lem}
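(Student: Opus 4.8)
The plan is to argue by contradiction: assume $M\neq G_1$ is a maximal subgroup of class at most $2$ and derive a contradiction from $n\ge 5$. Throughout I take $p$ odd; the case $p=2$ is immediate, since then the maximal subgroups of $G$ other than $G_1$ are dihedral, semidihedral or generalised quaternion of order at least $2^4$, all of class at least $3$.

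First I would pin down the derived subgroup of $M$. Since $M$ is a maximal subgroup, $M\trianglelefteq G$ and $\Phi(G)=G_2\le M$, so choosing any $s\in M\smallsetminus G_2$ we get $M=\langle s\rangle G_2$; and as $M$ and $G_1$ are distinct maximal subgroups we have $M\cap G_1=G_2$, whence $s\notin G_1$. Now $M'$ is characteristic in $M$ and hence normal in $G$. On one hand $M'\le G_3$, because $[s,G_2]\le[G,G_2]\le G_3$ and $[G_2,G_2]\le G_4$; on the other hand $M'\not\le G_4$, because $s\notin G_1=C_G(G_2/G_4)$ forces $[s,y]\in G_3\smallsetminus G_4$ for some $y\in G_2$, and $[s,y]\in M'$. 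Since the only normal subgroups of $G$ contained in $G_3$ are the terms $G_i$ with $i\ge 3$, these two facts give $M'=G_3$.

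Next I would exploit the class-$2$ hypothesis. It yields $G_3=M'\le Z(M)$, so $|M:Z(M)|\le|M:G_3|=p^2$; as $M$ is non-abelian (because $G_3\ne 1$) this forces $M/Z(M)\cong C_p\times C_p$. Since $M$ has class at most $2$, the commutator map factors through an alternating bilinear form on $M/Z(M)$, so $M'=\langle[x,y]\rangle$ for suitable $x,y$; in particular $M'=G_3$ would have to be cyclic. The contradiction then comes from showing that $G_3$ is \emph{not} cyclic when $p$ is odd and $n\ge 5$. Indeed $|G_3|=p^{\,n-3}\ge p^2$, while $\exp G_3$ is small: if $|G|\le p^{\hspace{.4pt}p+1}$ then $G_3\le\Phi(G)$ has exponent $p$, and if $|G|\ge p^{\hspace{.4pt}p+2}$ then $(\ref{pth powers})$ gives $\exp G_3\le p^{\lceil(n-3)/(p-1)\rceil}<p^{\,n-3}$. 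In either case $G_3$ cannot be cyclic, which contradicts the preceding paragraph, and therefore $M=G_1$.

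I expect the only delicate point to be the identification $M'=G_3$ in the second step — specifically the implication $s\notin G_1\Rightarrow M'\not\le G_4$, which is precisely the statement that $M$ does not centralise $G_2/G_4$ and is where the distinguished role of $G_1=C_G(G_2/G_4)$ enters. Everything after that is bookkeeping with the list of normal subgroups of $G$ and with the known power structure of groups of maximal class recorded in $(\ref{pth powers})$; notably, this approach never needs the degree of commutativity, so it handles the metabelian and the small-order ranges uniformly.
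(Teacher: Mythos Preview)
Your proof is correct and takes a genuinely different route from the paper's. Both arguments begin the same way, identifying $M'=G_3$ (the paper writes $[M,G_2]=G_3$, which is equivalent since $M/G_2$ is cyclic). They then diverge: the paper invokes the positive degree of commutativity of $G/G_5$ to obtain $G_1=C_G(G_3/G_5)$, whence $[M,G_3]=G_4$ and $\gamma_3(M)=[M',M]=[G_3,M]=G_4\ne 1$, directly contradicting class $\le 2$. You instead observe that class $\le 2$ forces $G_3=M'\le Z(M)$, so $|M/Z(M)|=p^2$, and bilinearity of the commutator then makes $M'=G_3$ cyclic; you rule this out via the power structure of $G$. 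Your approach trades the degree-of-commutativity citation for exponent information and treats $p=2$ separately; the paper's is slightly shorter, needs no case split on $p$, and yields the sharper conclusion $\gamma_3(M)=G_4$.

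One small point of looseness: your appeal to $(\ref{pth powers})$ for $\exp G_3$ is not quite complete, since $(\ref{pth powers})$ as stated covers only $i\le n-p$. For $n\ge p+3$ a single application with $i=3$ already gives $x^p\in G_{p+2}$ for $x\in G_3\smallsetminus G_4$, hence $|x|\le p\cdot|G_{p+2}|=p^{\,n-p-1}<p^{\,n-3}$, so $G_3$ is not cyclic. In the boundary case $n=p+2$ you need the companion fact $\exp G_{n-p+1}=p$ (equivalently $G_i^p\le G_{i+p-1}$ for all $i\ge 1$, with $G_j=1$ for $j\ge n$), which is in the same standard references the paper cites for $(\ref{pth powers})$. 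This does not affect the soundness of your argument, but it is worth stating explicitly.
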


\begin{proof}
Suppose for a contradiction that $M\ne G_1$.
By definition, we have $G_1=C_G(G_2/G_4)$, and consequently the commutator subgroup $[M,G_2]$ is not contained in $G_4$.
Since the only normal subgroups of $G$ contained in $G_2$ are of the form $G_i$, it follows that $[M,G_2]=G_3$.
On the other hand, by \cite[Theorem 4.6]{gus}, the degree of commutativity of $G/G_5$ is positive.
As a consequence, we have $[G_1,G_3]\le G_5$ i.e.\ $G_1=C_G(G_3/G_5)$.
Arguing as above, this implies that $[M,G_3]=G_4$.
Now since $|M:G_2|=p$, we have $M'=[M,G_2]$ and then $\gamma_3(M)=[M,G_2,M]=G_4\ne 1$, taking into account that $|G|\ge p^5$.
This contradicts the assumption that the class of $M$ is at most $2$.
\end{proof}

Now we can proceed to the proof of our main theorem.

\begin{thm}
\label{main}
Let $G$ be a $p$-group of maximal class of order $p^n$, with $n\ge p+1$, and assume that either $G$ is metabelian or $G$ contains a maximal subgroup of class at most $2$.
Then $G$ is a Beauville group if and only if $p\ge 5$ and one of the following two cases holds:
\begin{enumerate}
\item
$\mu(G)=2$, and either $n\not\equiv 2 \pmod{p-1}$, or $n=p+1$ and one of the maximal branches consisting of elements of order $p$ is $B(G_1)$.
\item
$\mu(G)=p$.
\end{enumerate}
In the first case, all Beauville groups are of wild type, and in the second case, all Beauville groups are of tame type.
\end{thm}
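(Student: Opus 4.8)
The plan is to translate the Beauville condition into a combinatorial statement about the $p+1$ maximal branches of $G$ and the orders of their elements, and then to read off the answer from $\mu(G)$, which by Theorem~\ref{values of mu} lies in $\{0,1,2,p\}$. The starting point is the analysis of $\Sigma(S)$ for a generating pair $S=\{x,y\}$. Since $d(G)=2$ and $\Phi(G)=G_2$, the images of $x$, $y$ and $xy$ span three distinct lines of $G/\Phi(G)\cong C_p\times C_p$, so the triple $T=\{x,y,xy\}$ meets three distinct maximal branches; conversely a scaling argument shows that every set of three distinct branches is realized by some triple. The key computation is to describe $\bigcup_g\langle t\rangle^g$ according to the type of $t$: for $t$ uniform of order $p$ it equals $B(M)\cup\{1\}$ (its conjugates fill the coset $t\Phi(G)$, by Lemma~\ref{all branches uniform} and the properties of uniform elements); for $t$ uniform of order $p^2$ the power $t^p$ generates $Z(G)$, so it equals $B(M)\cup Z(G)$; and for $t\in B(G_1)$ all conjugates and powers of $t$ remain inside the normal subgroup $G_1$. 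Hence each $\Sigma(S)$ is a union of at most three branches, together with $Z(G)=G_{n-1}$ in some cases.

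From this I would extract the governing dichotomy. As $Z(G)$ is central of order $p$, one has $\Sigma(S)\cap Z(G)=1$ if and only if no $t\in T$ satisfies $Z(G)\le\langle t\rangle$. Thus an order-$p$ branch is \emph{safe}, an order-$p^2$ branch is \emph{unsafe} (it forces $Z(G)\subseteq\Sigma(S)$), and $B(G_1)$ must be examined on its own. This is where the arithmetic enters: for $w\in G_1\smallsetminus G_2$ of order $p^k$ I would iterate \eqref{pth powers} to locate $w^{p^{k-1}}$ and show that it lands in $Z(G)=G_{n-1}$ exactly when $n\equiv 2\pmod{p-1}$, the borderline application of \eqref{pth powers} at index $n-p$ requiring some care; the remaining case $n=p+1$ is handled through the regularity of $G_1$. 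Consequently $B(G_1)$ is safe precisely when $n\not\equiv 2\pmod{p-1}$, or $n=p+1$ with $B(G_1)$ of exponent $p$.

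With this dictionary the two implications become counting arguments. For the forward direction, a Beauville structure forces at least one of $S_1,S_2$ to avoid $Z(G)$, hence to be supported on three safe branches; since the number of safe branches is $\mu(G)$ plus the contribution of $B(G_1)$, Theorem~\ref{values of mu} leaves only $\mu(G)\in\{2,p\}$, and in the case $\mu(G)=2$ it pins down exactly the stated congruence condition. The second set must then be supported on three further branches whose $\Sigma$ is disjoint from the first; counting the available branches of each type forces at least five or six distinct branches, whence $p\ge5$. For the converse I would display explicit structures and verify disjointness through the branch/center description: when $\mu(G)=p$ take $S_1$ on three order-$p$ branches and $S_2$ on the remaining order-$p$ branches, pushing one product into $B(G_1)$ when $p=5$; when $\mu(G)=2$ take $S_1$ on the three safe branches and $S_2$ on three order-$p^2$ branches.

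Finally, the tame/wild assertions follow by testing arbitrary lifts of Beauville structures of $C_p\times C_p$, which correspond to partitions of six distinct lines into two triples. When $\mu(G)=p$ every such lift has one triple consisting entirely of safe (order-$p$) branches and the other with its $B(G_1)$-part trapped inside $G_1$, so the two $\Sigma$'s meet only in $1$ and $G$ is of tame type. When $\mu(G)=2$ one instead splits the three safe branches as $2+1$ between the two triples; then each triple acquires an unsafe branch, both $\Sigma$'s contain $Z(G)$, and the lift fails to be a Beauville structure, so $G$ is of wild type. The main obstacle I anticipate is the $B(G_1)$ analysis—both the exact power computation producing the modulus $p-1$ and the verification that conjugates of powers of a $G_1$-element never leave $G_1$ nor meet $Z(G)$ in the safe case—together with the low-rank bookkeeping that distinguishes $p=5$ from $p\ge7$ and the separate regular case $n=p+1$.
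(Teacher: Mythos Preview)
Your proposal is correct and follows essentially the same approach as the paper: both arguments reduce the Beauville condition to a branch-by-branch analysis of whether $Z(G)\subseteq\Sigma(S)$, use the conjugacy of uniform elements together with \eqref{pth powers} to compute $\langle t\rangle\cap Z(G)$ (your ``safe/unsafe'' dichotomy), invoke Theorem~\ref{values of mu} to force $\mu(G)\in\{2,p\}$, and then exhibit the explicit structure $S_1=\{s,s_1\}$, $S_2=\{ss_1^2,ss_1^4\}$ (your ``$S_1$ on the three safe branches, $S_2$ on three order-$p^2$ branches'') for the converse. The tame/wild verification is likewise identical in substance, the paper simply placing one order-$p^2$ element in each triple rather than tracking a $2+1$ split of the safe branches.
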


\begin{proof}
We first prove the `only if' part of the statement.
Let us suppose that $S_1$ and $S_2$ form a Beauville structure for $G$, with associated triples $T_1$ and $T_2$.
We start by showing that $p\ge 5$.
By way of contradiction, assume that $p=2$ or $3$.
Then $G$ has positive degree of commutativity \cite[Theorem 4.6]{gus}, and consequently all elements in
$G\smallsetminus G_1$ are uniform.
Now the elements in $T_1$, and similarly those in $T_2$, lie in different maximal branches of $G$.
Since $G$ has at most $4$ maximal subgroups, some $x_1\in T_1$ and $x_2\in T_2$ lie in the same maximal branch $B(M)$, where $M\ne G_1$.
Now observe that for every $x\in B(M)$, all elements in the coset $x\Phi(G)$ are conjugate to $x$.
Hence some power of $x_1$ is conjugate to some power of $x_2$.
This is contrary to the condition $\Sigma(S_1)\cap \Sigma(S_2)=1$, which holds according to (\ref{BS}).
Thus we have $p\ge 5$ and, in particular, $n\ge 6$.
By Lemma \ref{max of class 2}, in the case where $G$ has a maximal subgroup of class $\le 2$, that subgroup must be $G_1$.

Let us prove that $\mu(G)\ge 2$.
Otherwise there exist $x_1\in T_1$ and $x_2\in T_2$ which are uniform elements of order $p^2$.
It follows that $\langle x_1^p\rangle=Z(G)=\langle x_2^p\rangle$, again a contradiction.
By applying Theorem \ref{values of mu}, we conclude that either $\mu(G)=2$ or $\mu(G)=p$.

Assume now that $\mu(G)=2$, and either that $n=k(p-1)+2$ for some $k\ge 2$ or that $n=p+1$ and the maximal branch $B(G_1)$ consists of elements of order $p^2$.
Again, we seek a contradiction.
In this case, each of the triples $T_1$ and $T_2$ contains an element which lies either in $B(G_1)$ or in a maximal branch $B(M)$ consisting of elements of order $p^2$, with $M\ne G_1$.
Let $x$ be any element of $B(G_1)$.
If $n=k(p-1)+2$ for some $k\ge 2$ then we have $x^{p^k}\in Z(G)\smallsetminus 1$, by (\ref{pth powers}).
On the other hand, if $n=p+1$ and the elements of $B(G_1)$ are of order $p^2$, then since $\exp G/Z(G)=p$,
we have $x^p\in Z(G)\smallsetminus 1$.
In any case, we get $Z(G)\subseteq \Sigma(S_1)\cap \Sigma(S_2)$, which is contrary to our assumption that $S_1$ and $S_2$ form a Beauville structure.
This completes the proof of the first implication.

Let us prove the converse.
Thus we assume that $p\ge 5$ in the remainder.
Suppose first that $\mu(G)=p$.
In this case, we have to show that $G$ is a Beauville group of tame type, i.e.\ that every lift to $G$ of a Beauville structure of $G/\Phi(G)$ yields a Beauville structure of $G$.
Let us consider then two minimal sets of generators $S_1$ and $S_2$ of $G$ that map onto a Beauville structure of
$G/\Phi(G)$.
Then at most one of the elements in $T_1\cup T_2$ lies in $B(G_1)$.
Hence for every choice of $x_1\in T_1$ and $x_2\in T_2$, at least one of the elements, say $x_1$, is of order $p$.
Thus if $\langle x_1^g \rangle$ and $\langle x_2^h \rangle$ have non-trivial intersection for some $g,h\in G$, then
$\langle x_1^g \rangle \subseteq \langle x_2^h \rangle$ and $\langle x_1\Phi(G) \rangle=\langle x_2\Phi(G) \rangle$.
This is impossible, since $x_1\Phi(G)$ and $x_2\Phi(G)$ participate in different triples of a Beauville structure of
$G/\Phi(G)$.
Hence $S_1$ and $S_2$ form a Beauville structure of $G$.

Now suppose that we are in case (i), and choose a uniform element $s$ and an element
$s_1\in G_1\smallsetminus G_2$ such that both $s$ and $ss_1$ are of order $p$.
We claim that $S_1=\{s,s_1\}$ and $S_2=\{ss_1^2,ss_1^4\}$ is a Beauville structure of $G$.
We need to see that $\langle x_1^g \rangle \cap \langle x_2^h \rangle=1$ for all $x_1\in T_1$, $x_2\in T_2$ and $g,h\in G$.
Observe that $x_1^g$ and $x_2^h$ lie in different maximal subgroups of $G$, since $p\ge 5$.
Thus if $x_1=s$ or $ss_1$, or if $x_1=s_1$ and $B(G_1)$ consists of elements of order $p$, we can argue as in the previous paragraph.
Hence we assume that $x_1=s_1$ and that $|G|=p^n$ with $n\ge p+2$ and $n\ne k(p-1)+2$ for all $k\ge 2$.
By applying repeatedly (\ref{pth powers}), we have $\langle s_1 \rangle \cap Z(G)=1$.
Since $\Omega_1(\langle x_2 \rangle)=Z(G)$ for all $x_2\in T_2$, the claim follows.
Thus $G$ is a Beauville group.

Let us finally see that $G$ is of wild type in case (i).
There are at least two maximal branches $B(M_1)$ and $B(M_2)$ which consist of elements of order $p^2$.
It is always possible to construct a Beauville structure in $G/\Phi(G)\cong C_p\times C_p$ in which the first set of generators contains an element from $M_1/\Phi(G)$ and the second set an element from $M_2/\Phi(G)$.
However, no lift of this structure can be a Beauville structure of $G$, since
$\langle x_1^p \rangle=\langle x_2^p \rangle=Z(G)$ for all $x_1\in B(M_1)$ and $x_2\in B(M_2)$.
\end{proof}

\begin{rmk}
\label{always valid}
Actually, the `if part' of the previous theorem is valid for all $p$-groups of maximal class of order at least
$p^{\hspace{.4pt}p+1}$, without requiring that $G$ is metabelian or that $G$ has a maximal subgroup of class at most $2$.
\end{rmk}

The case of groups of maximal class with an abelian maximal subgroup is especially easy to describe.

\begin{cor}
Let $G$ be a $p$-group of maximal class of order at least $p^{\hspace{.4pt}p+1}$.
If $G$ has an abelian maximal subgroup, then $G$ is a Beauville group if and only if every element of
$G\smallsetminus G_1$ is of order $p$.
All these Beauville groups are of tame type.
\end{cor}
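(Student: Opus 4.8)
The plan is to read the corollary off from Theorem~\ref{main} together with Theorem~\ref{values of mu}, after verifying that the hypotheses of the former are met. First I would note that any abelian maximal subgroup $A$ of $G$ satisfies $G'\le A$ (since $A\trianglelefteq G$ has index $p$, so $G/A$ is abelian), whence $G''\le[A,A]=1$ and $G$ is \emph{metabelian}. Thus $G$ falls within the scope of Theorem~\ref{main}, and $|G|\ge p^{\hspace{.4pt}p+1}$ gives $n\ge p+1$, exactly as required there. I would also record (as already observed inside the proof of Theorem~\ref{values of mu}) that this abelian maximal subgroup must be $G_1$, which makes $G\smallsetminus G_1$ the set of elements lying outside the abelian maximal subgroup.

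The key preparatory step is to translate the arithmetic hypothesis into a statement about $\mu(G)$. Since $\Phi(G)=G_2\le G_1$, the set $G\smallsetminus G_1$ is precisely the disjoint union of the $p$ maximal branches $B(M)$ with $M\ne G_1$. By Lemma~\ref{all branches uniform}, all elements of each such branch have a common order, equal to $p$ or $p^2$. Hence the condition \emph{every element of $G\smallsetminus G_1$ has order $p$} is equivalent to saying that all $p$ of these branches consist of elements of order $p$, that is, to $\mu(G)=p$.

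It then remains to combine the two theorems. For the `only if' direction, if $G$ is a Beauville group then Theorem~\ref{main} yields $p\ge 5$ together with $\mu(G)\in\{2,p\}$; but $G$ has an abelian maximal subgroup, so the last clause of Theorem~\ref{values of mu} forces $\mu(G)\in\{0,1,p\}$ and in particular excludes $\mu(G)=2$, leaving $\mu(G)=p$, i.e.\ all elements of $G\smallsetminus G_1$ have order $p$. For the converse, the hypothesis gives $\mu(G)=p$, and then case~(ii) of Theorem~\ref{main} (applicable here since $G$ is metabelian, and in any event by Remark~\ref{always valid}) shows that $G$ is a Beauville group, and that it is of \emph{tame type}, which gives the final assertion. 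The one point to treat with care is the role of the prime: the `only if' part produces $p\ge 5$ automatically, while the `if' part is read off in the range $p\ge 5$ where Beauville $p$-groups occur, since for $p=2,3$ the group is never Beauville (as in the opening paragraph of the proof of Theorem~\ref{main}). I expect this prime bookkeeping, rather than any substantial new argument, to be the only delicate point; the remainder is a direct appeal to Theorems~\ref{main} and~\ref{values of mu} via the identification of $G\smallsetminus G_1$ with the union of the non-$G_1$ branches.
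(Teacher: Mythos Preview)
Your proposal is correct and follows exactly the paper's approach, which is simply to combine Theorems~\ref{values of mu} and~\ref{main}; you merely supply the details (that $G$ is metabelian, that the abelian maximal subgroup is $G_1$, and that the order condition on $G\smallsetminus G_1$ is equivalent to $\mu(G)=p$) which the paper's one-line proof leaves implicit. Your observation that an abelian maximal subgroup already has class $\le 2$, so Theorem~\ref{main} applies directly without even invoking the metabelian property, would streamline the argument further.
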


\begin{proof}
This follows immediately by combining Theorems \ref{values of mu} and \ref{main}.
\end{proof}

We remark that the only infinite pro-$p$ group of maximal class $P$ has an abelian subgroup $A$ of index $p$ with the property that all elements in $P\smallsetminus A$ are of order $p$.
Thus by taking finite quotients of $P$ we get infinitely many examples of Beauville groups of tame type in our main theorem. 
The existence of infinitely many groups of wild type follows from the construction of metabelian $p$-groups of maximal class given by Miech in \cite{mie}.
More precisely, if we consider metabelian groups satisfying the condition $[G_1,G_2]=G_{n-p+2}$, then Lemma 8 of \cite{mie} together with our proof of Theorem \ref{values of mu} shows the existence of infinitely many groups with
$\mu(G)=2$.
If $n\not\equiv 2 \pmod{p-1}$ then these groups are Beauville of wild type, according to Theorem \ref{main}.

\vspace{10pt}

Every quotient of order $\ge p^2$ of a $p$-group of maximal class is obviously again of maximal class.
We conclude with the following surprising consequence of our main theorem.

\begin{cor}
Let $G$ be a $p$-group of maximal class, where $p\ge 5$.
Then every proper quotient of $G$ is a Beauville group, and it is of tame type.
\end{cor}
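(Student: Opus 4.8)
The plan is to deduce everything from Theorem \ref{main} (via Remark \ref{always valid}) and from the classification of small Beauville groups of maximal class in \cite[Corollary 2.10]{FG}, after a reduction that pins down the proper quotients. Since the only normal subgroups of $G$ contained in $\Phi(G)=G_2$ are the lower central terms $G_i$ with $i\ge 2$, every proper quotient of $G$ of maximal class is of the form $Q=G/G_i$ with $2\le i\le n-1$; here $|Q|=p^i$ and $Q_1:=C_Q(\gamma_2(Q)/\gamma_4(Q))=G_1/G_i$. I will show that each such $Q$ is a Beauville group of tame type, treating separately the ranges $i\ge p+1$ and $i\le p$.

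The key input is a control of $p$th powers. If $x\in G\smallsetminus G_1$, then $x^p\in Z(G)=G_{n-1}\subseteq G_i$ because $i\le n-1$; hence the image of $x$ in $Q$ has order $p$. Thus every element of $Q\smallsetminus Q_1$ has order $p$. As the $p$ maximal subgroups of $Q$ other than $Q_1$ partition $Q\smallsetminus Q_1$ into their branches, this says precisely that $\mu(Q)=p$. When in addition $i\le p$, a direct inspection of (\ref{pth powers}), supplemented by the equalities $\exp\Phi(G)=\exp G/Z(G)=p$ valid when $n\le p+1$, shows that $x^p\in G_i$ for $x\in G_1$ as well (for $x\in G_1\smallsetminus G_2$ one has $x^p\in G_p\subseteq G_i$ if $n\ge p+1$ and $x^p\in G_{n-1}\subseteq G_i$ if $n\le p$, while $x\in\Phi(G)$ gives $x^p\in\gamma_{p+1}(G)\subseteq G_i$). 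Hence $\exp Q=p$ throughout this range.

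It remains to invoke the earlier results. If $i\ge p+1$ then $|Q|\ge p^{\hspace{.4pt}p+1}$ and $\mu(Q)=p$, so $Q$ lies in case (ii) of Theorem \ref{main}; by Remark \ref{always valid} the sufficiency direction of that theorem holds for every $p$-group of maximal class of order at least $p^{\hspace{.4pt}p+1}$ without the metabelian or class at most $2$ hypothesis, and the argument given there for case (ii) yields tame type directly. Thus $Q$ is Beauville of tame type. If instead $i\le p$ then $|Q|\le p^{\hspace{.4pt}p}$ and $\exp Q=p$, so $Q$ is a Beauville group by \cite[Corollary 2.10]{FG}; being of order at most $p^{\hspace{.4pt}p}$ it has good power structure in the sense of \cite[Theorem 2.5]{FG} and is therefore of tame type.

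The main obstacle I anticipate is the uniform verification of the containment $x^p\in G_i$ across all element types and all sizes of $G$. The generic estimate is immediate from (\ref{pth powers}), but the elements of the branch $B(G_1)$ and the boundary regimes---$n=p+1$ and the regular range $n\le p$, where (\ref{pth powers}) does not literally apply---must be handled by hand using $\exp\Phi(G)=\exp G/Z(G)=p$ together with Lemma \ref{all branches uniform}. A secondary subtlety is to confirm that it is the \emph{tame-type} conclusion of case (ii), and not merely the bare Beauville property, that persists once the structural hypotheses are dropped via Remark \ref{always valid}.
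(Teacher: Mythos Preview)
Your proposal is correct and follows essentially the same strategy as the paper: show that uniform elements map to elements of order $p$ in the quotient (so $\mu=p$), then invoke Theorem~\ref{main} via Remark~\ref{always valid} for quotients of order at least $p^{\,p+1}$, and \cite[Corollary~2.10 and Theorem~2.5]{FG} (exponent $p$, hence tame) for the smaller ones.

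The only difference is organizational. The paper observes that every proper quotient of $G$ has the form $H/Z(H)$ for some $p$-group $H$ of maximal class (namely $H=G/G_{i+1}$ when the quotient is $G/G_i$), and so reduces everything to the single case $Q=G/Z(G)$; the two regimes $|Q|\le p^{\,p}$ and $|Q|\ge p^{\,p+1}$ then correspond exactly to your split $i\le p$ versus $i\ge p+1$. Your version trades this reduction for an explicit treatment of all $G/G_i$, which forces you to verify the containment $x^p\in G_i$ across the several subcases you list; the paper avoids that bookkeeping at the cost of leaving the ``it suffices'' step implicit. Neither approach gains real generality over the other.
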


\begin{proof}
It suffices to show that $G/Z(G)$ is a Beauville group.
If $|G/Z(G)|\le p^{\hspace{.4pt}p}$ then, since $\exp G/Z(G)=p$, we only need to apply Corollary 2.10 of \cite{FG}, 
Assume now that $|G/Z(G)|\ge p^{\hspace{.4pt}p+1}$.
Since the $p$th powers of all uniform elements of $G$ lie in $Z(G)$, it follows that $\mu(G/Z(G))=p$.
Now the result follows from Theorem \ref{main} and Remark \ref{always valid}.
\end{proof}

\end{document}